\documentclass[12pt]{amsart}

\usepackage{amssymb,latexsym}

\usepackage{color}

\usepackage{enumerate}

\usepackage[T1]{fontenc}

 \usepackage[french,english]{babel}

 \usepackage{xcolor}
 
\usepackage{hyperref}


\newtheorem{thm}{Theorem}[section]

\newtheorem{lem}[thm]{Lemma}
\newtheorem{pro}[thm]{Proposition}
\theoremstyle{definition}

\newtheorem{rem}[thm]{Remark}

\numberwithin{equation}{section}

\renewcommand{\le}{\leqslant}
\renewcommand{\leq}{\leqslant}

\renewcommand{\geq}{\geqslant}

\newcommand{\R}{\mathbb{R}}
\newcommand{\C}{\mathbb{C}}

\newcommand{\re}{\textup{Re}}

\newcommand{\mc}{\mathcal}

\newcommand{\Ns}{N_0^{\textup{s}}}

\newcommand{\newabstract}[1]{%
  \par\bigskip
  \csname otherlanguage*\endcsname{#1}%
  \csname captions#1\endcsname
  \item[\hskip\labelsep\scshape\abstractname.]
}

\begin{document}

\baselineskip=17pt

\title[$2/3$ of the zeros of the zeta function are simple and on the critical line
]{A new proof that more than $2/3$ of the zeros of the Riemann zeta function are simple and on the critical line
}

\author{Youness Lamzouri}

\address{
Universit\'e de Lorraine, CNRS, IECL, 
F-54000 Nancy, France}

\email{youness.lamzouri@univ-lorraine.fr}

\date{\today}

\begin{abstract} 
We obtain a new, conceptually simpler, unconditional proof that more than $67.25\%$ of the non-trivial zeros of the Riemann zeta function are simple and on the critical line, and that at least $83.62\%$ of the non-trivial zeros are distinct. Our approach also yields two new unconditional estimates on simple zeros and zeros on the critical line. More precisely, we prove that the proportion of zeros that are simple or lie on the critical line (or both) is at least $88.76\%$, and that the average of the proportions of simple zeros and of zeros on the critical line is at least $83.62\%$. A proof of the bounds for simple zeros on the critical line and for distinct zeros was very recently produced by an internal research version of Claude developed by Anthropic and subsequently verified by two mathematicians at Anthropic, Levent Alp\"oge and Ralph Furman, whereas our two additional estimates are neither stated nor proved in the Claude paper.
The argument produced by Claude is technically intricate, and its main mechanism is not immediately transparent. It combines several ingredients from linear algebra, including a finite-dimensional matrix representation of Weil's Hermitian form and a rank--trace inequality for Hermitian matrices, with a second moment calculation over the zeros using the explicit formula.
 Our new approach proceeds by replacing the entire finite-dimensional matrix framework by a single Hilbert space inequality, which allows for a direct application of Montgomery's theorem on the pair correlation of zeros of the zeta function, in the unconditional form obtained by Baluyot, Goldston, Suriajaya and
Turnage-Butterbaugh.  

\end{abstract}

\subjclass[2020]{Primary 11M06, 11M26}

\maketitle


\section{Introduction} 
The Riemann hypothesis (RH) is one of the central open problems in mathematics and one of the seven Millennium Prize problems. This conjecture, which was formulated by Riemann in 1859, asserts that all the non-trivial zeros\footnote{That is, the zeros lying in the critical strip $0\leq\re(s)\leq 1$.} of the Riemann zeta function lie on the critical line $\re(s)=1/2$. Its truth would have tremendous applications in number theory, most notably for our understanding of the distribution of prime numbers.

Let $\rho=\beta+i\gamma$ denote a non-trivial zero of the Riemann zeta function, and let $m_\rho$ be its multiplicity. We write
$$
N(T):=\sum_{\substack{\rho\\0<\gamma\leq T}}1,
$$
where the zeros are counted with multiplicity. The classical Riemann--von Mangoldt formula gives
$
N(T)\sim \frac{T}{2\pi}\log T,
$
as $T\to\infty$.
In the absence of a proof of the Riemann hypothesis, a fundamental problem has been to determine what proportion of the zeros can be shown to lie on the critical line. The first progress in this direction goes back to Hardy \cite{Har14}, who proved in 1914 that infinitely many zeros lie on the line $\re(s)=1/2$. Hardy and Littlewood \cite{HL21} subsequently showed that the number of such zeros up to height $T$ is $\gg T$. A fundamental breakthrough was made by Selberg \cite{Sel42}, who pioneered the use of mollification in this problem and proved in 1942 that a positive proportion of the zeros lie on the critical line. In 1974, Levinson \cite{Lev74} introduced a new mollifier method and used it to prove that more than one third of the zeros lie on the line $\re(s)=1/2$. Conrey \cite{Con89} later developed Levinson's method further and proved in 1989 that more than $2/5$ of the zeros are simple and lie on the critical line. More recently, Pratt, Robles, Zaharescu and Zeindler \cite{PRZZ20} obtained the slightly better bounds
$$
\liminf_{T\to\infty}\frac{N_0(T)}{N(T)}\geq 0.417293
\qquad\text{and}\qquad
\liminf_{T\to\infty}\frac{\Ns(T)}{N(T)}\geq 0.407511,
$$
where
$$
N_0(T):=\sum_{\substack{\rho\\0<\gamma\leq T\\\beta=1/2}}1
\qquad\textup{and}\qquad
\Ns(T):=\left|\left\{\rho:0<\gamma\leq T,\ \beta=\frac12,\ \text{and }m_\rho=1\right\}\right|,
$$
and where the zeros in the definition of $N_0(T)$ are counted with multiplicity. We also let
$$
N_s(T):=\sum_{\substack{\rho\\0<\gamma\leq T\\m_\rho=1}}1,
\quad
N_{s\cup 0}(T):=
\sum_{\substack{\rho\\0<\gamma\leq T\\m_\rho=1\ \textup{or}\ \beta=1/2}}1,
\quad \textup{ and } \quad
N_d(T):=\left|\left\{\rho:0<\gamma\leq T\right\}\right|.
$$ Thus $N_s(T)$ denotes the number of simple zeros, $N_d(T)$ the number of distinct zeros, and $N_{s\cup 0}(T)$ the number of zeros that are simple or lie on the critical line, counted with multiplicity, all up to height $T$.

In 1973, Montgomery \cite{Mo73} established his pair correlation theorem for the zeros of the zeta function and used it to prove, assuming RH, that at least $2/3$ of the zeros are simple. Montgomery and Taylor (see \cite{Mo75}) subsequently obtained the larger proportion 
\begin{equation}\label{Eq:DefinitionC_0}
C_0:=\frac32-\frac{1}{\sqrt{2}}\cot\left(\frac{1}{\sqrt{2}}\right)
=0.6725007\ldots,
\end{equation}
and this was improved to $0.6792$ by Chirre, Gon\c{c}alves, and de Laat \cite{CGL20}. 
By a different method, Conrey, Ghosh and Gonek \cite{CGG98} proved, assuming RH and the Generalized Lindel\"of Hypothesis (GLH), that at least
$
\frac{19}{27}=0.7037\ldots
$ of the zeta zeros are simple, and that at least $0.8456$ of the zeros are distinct. Bui and Heath-Brown \cite{BHB13} subsequently removed GLH from this result and obtained the proportion $19/27$ assuming only RH.
Unconditionally, Farmer \cite{Far95} proved that more than $63.95\%$ of the zeros are distinct. This was improved by Ki and Lee \cite[Theorem 5]{KL12}, who proved that more than $70\%$ of the zeros are distinct.

More recently, substantial progress has been made towards removing the Riemann hypothesis from Montgomery's pair correlation argument. Aryan \cite{Ary22} obtained an unconditional form of the pair correlation formula for the Fej\'er kernel and showed that Montgomery's $2/3$ bound for simple zeros follows from a zero-density hypothesis. Baluyot, Goldston, Suriajaya and Turnage-Butterbaugh \cite{BGST24} subsequently established an unconditional version of Montgomery's pair correlation theorem and, under hypotheses weaker than RH, proved that at least $61.7\%$ of the zeros are simple. In further work \cite{BGST25}, they showed for the first time that the pair correlation method can also give information on the horizontal distribution of the zeros. 
More precisely, they proved that if for all large $T$, all the zeros with $T<\gamma\leq 2T$ satisfy
$|\beta-\frac12|<\frac{b}{2\log T},$
then at least $2/3$ of the zeros are simple and on the critical line when $b=0.3185$, while for $b=0.001$ the corresponding proportion is at least $0.6725$.
 Goldston and Suriajaya \cite{GS25,GS26} subsequently clarified the mechanism behind these results. 
Moreover, extending earlier work of Gallagher and Mueller \cite{GM78}, Goldston, Lee, Schettler and Suriajaya \cite{GLSS25} showed that Montgomery's full Pair Correlation Conjecture, without assuming RH, implies that asymptotically $100\%$ of the zeros are simple and on the critical line.

The main purpose of this paper is to give a new and conceptually simpler approach to the breakthrough result recently announced by an internal research version of Claude, developed by Anthropic, and subsequently verified by Alp\"oge and Furman \cite{AlFu26}. In addition to recovering their bounds, our approach yields two further unconditional estimates concerning simple zeros and zeros on the critical line.

\begin{thm}\label{Thm:Main}
 We have
$$
 \liminf_{T\to \infty} \frac{\Ns(T)}{N(T)}
 \geq C_0=0.67250\ldots,
 \quad \textup{ and } \quad 
 \liminf_{T\to \infty} \frac{N_d(T)}{N(T)}
 \geq C_1:=\frac{C_0+1}{2}=0.83625\ldots.
$$
Moreover, we have
$$
  \liminf_{T\to\infty}
 \frac{N_{s\cup 0}(T)}{N(T)}
 \geq
 C_2
 =
 0.88762\ldots
 \ \textup{ and } \
 \liminf_{T\to\infty}
 \frac{N_{s}(T)+N_0(T)}{2N(T)}
 \geq C_1=0.83625\ldots 
.
$$
Here $C_0$ is defined in \eqref{Eq:DefinitionC_0} and $C_2:=(1+2\sqrt{2}+2C_0)/(3+2\sqrt{2}).$
Thus, more than $67.25\%$ of the non-trivial zeros of the zeta function are simple and on the critical line, and more than $88.76\%$ of the zeros are either simple or lie on the critical line (or both). In addition, more than $83.62\%$ of the zeros are distinct, and the average of the proportions of simple zeros and of zeros on the critical line is at least $83.62\%$.
\end{thm}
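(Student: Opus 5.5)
The plan is to reduce all four bounds to a single quadratic inequality for the zeros grouped by ordinate, and then to finish each bound by an elementary pointwise inequality for integers.

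\textbf{Grouping and the target inequality.} For each ordinate $\gamma$ let $M_\gamma$ be the number of zeros $\rho=\beta+i\gamma$ with that ordinate, counted with multiplicity. By the functional equation, a zero off the line comes with its partner $1-\bar\rho$, which has the same ordinate. Hence $M_\gamma=1$ holds exactly when the ordinate carries a single zero that is both simple and on the critical line. The main analytic input I aim for is
$$
\sum_{0<\gamma\le T} M_\gamma^2 \;\le\; (2-C_0+o(1))\,N(T),
$$
where the sum runs over distinct ordinates. Under RH this is the Montgomery--Taylor estimate with $m_\rho$ in place of $M_\gamma$. Unconditionally, I would obtain it from the unconditional pair correlation theorem of Baluyot--Goldston--Suriajaya--Turnage-Butterbaugh.

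\textbf{Deriving the four bounds.} Given the target inequality, each bound follows from an integer inequality applied to every ordinate class.
\begin{itemize}
\item For $\Ns$: since $M(2-M)\le \mathbf 1_{M=1}$ for every integer $M\ge 1$, we get $\Ns(T)\ge \sum_\gamma M_\gamma(2-M_\gamma)\ge 2N(T)-\sum_\gamma M_\gamma^2\ge (C_0-o(1))N(T)$.
\item For $N_d$: each class contains at least one distinct zero, and $1\ge (3M-M^2)/2$ for integers $M\ge1$, with equality at $M=1,2$. This gives $N_d(T)\ge \tfrac12\bigl(3N(T)-\sum M_\gamma^2\bigr)\ge \tfrac{1+C_0}{2}N(T)$.
\item For $(N_s+N_0)/2$: I would check that in each class the quantity $\#\{\text{simple}\}+\#\{\text{on the line}\}$, both counted with multiplicity, is at least $3M-M^2$. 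For $M=1$ this reads $2\ge2$. For $M=2$ it reads $\ge 2$, which holds either for an off-line pair of simple zeros or for a double zero on the line. For larger $M$ the right-hand side is nonpositive. The same computation as for $N_d$ then gives $C_1$.
\end{itemize}

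\textbf{Proving the target inequality.} I would write Montgomery's sum
$$
\sum_{\rho,\rho'} x^{\rho-\overline{\rho'}}\,w(\gamma-\gamma')
$$
as a squared norm. Concretely, take $v_\rho(t)=x^{\rho-1/2}$-type functions in a weighted $L^2$ space whose Gram matrix is the pair-correlation kernel. This Gram matrix is positive semidefinite regardless of the real parts $\beta$. The explicit-formula evaluation of BGST then gives its asymptotic value $N(T)\bigl(F\text{-average}\bigr)$, with an error controlled by zero-density estimates.

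The Hilbert space inequality I have in mind bounds the squared norm of the projection onto each class: by Cauchy--Schwarz,
$$
\Bigl\|\sum_{\rho:\,\Im\rho=\gamma} v_\rho\Bigr\|^2
$$
is at least of size comparable to $M_\gamma^2$. The point is that within a class $\rho-\overline{\rho'}$ has imaginary part $0$ and real part of size $O(1/\log T)$, which is harmless after rescaling. After this step, Montgomery--Taylor's optimisation of the test function (the extremal problem that produces $\cot(1/\sqrt2)$) carries over verbatim and yields $2-C_0$.

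\textbf{The constant $C_2$ and the main obstacle.} The bound for $N_{s\cup0}$ requires more than $\sum M^2$. An off-line pair of simple zeros already counts fully towards $N_{s\cup0}$, so the pointwise inequality has to be refined. I would weight the Gram contribution of a class according to the spread $|\beta-\beta'|$ of its members. Alternatively, I would combine the target inequality with a second, differently weighted one, taking a convex combination with parameter optimised at a value involving $\sqrt2$. That optimisation is where $(1+2\sqrt2+2C_0)/(3+2\sqrt2)$ should emerge.

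I expect the main obstacle to be the unconditional positivity/diagonal step: showing, uniformly in unknown $\beta$'s, that the off-line partners contribute to the Gram form essentially as a double zero would. This must be done without losing constants, so that the RH-case extremal analysis transfers intact.
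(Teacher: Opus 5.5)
Your deduction of the first, second and fourth bounds from the target inequality is correct: the integer inequalities $\mathbf 1_{M=1}\ge M(2-M)$ and $1\ge(3M-M^2)/2$, and your class-by-class check for $N_s+N_0$, are all fine. The gap is the target $\sum_\gamma M_\gamma^2\le(2-C_0+o(1))N(T)$ itself. You never establish it, and the mechanism you sketch cannot give it.

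Put $z=i(\rho-\frac12)\frac{\log T}{2\pi}$. Once the weight $w$ is removed, what the pair correlation theorem controls is the full sum $\sum_{z,s}K(z-s)^2$. Pairs with $\gamma=\gamma'$ give $K\big(i(\beta-\beta')\frac{\log T}{2\pi}\big)^2\ge1$, so each class block is indeed $\ge M_\gamma^2$. Pairs with $\gamma\ne\gamma'$ and $\beta\ne\beta'$, however, give complex values whose real parts can be negative, and you would need their total to be $\ge -o(N(T))$.

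Writing the form as a squared norm only shows that the total is $\ge 0$. Cauchy--Schwarz inside a class bounds $\|V_\gamma\|^2$, where $V_\gamma$ is the sum of the $v_\rho$ with ordinate $\gamma$. But $\|\sum_\gamma V_\gamma\|^2\ne\sum_\gamma\|V_\gamma\|^2$, and without RH the cross terms are exactly what is unknown: no nonconstant entire $K$ has $\re K\ge0$ on all of $\C$.

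Nor does the Hilbert-space structure supply the target. With $f_z(u)=\eta(u)e^{-2\pi iuz}$ one has $\sum_{z,s}K(z-s)^2=\mathrm{tr}(S^2)$ for the self-adjoint operator $S=\sum_z f_z\langle\cdot,f_{\bar z}\rangle$. An off-line pair of multiplicity $m$ contributes the indefinite piece $2m(gg^*-hh^*)$, with $g\perp h$ and $\|g\|^2-\|h\|^2=1$. Take an off-line double pair with $(g_1,h_1)=(\frac{\sqrt5}{2}e_1,\frac12e_2)$ and an off-line simple pair with $(g_2,h_2)=(\sqrt2\,e_2,e_1)$. Then $S=3e_1e_1^*+3e_2e_2^*$, hence $\mathrm{tr}(S^2)=18<20=\sum_\gamma M_\gamma^2$. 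So your target does not follow from positivity plus this signature structure; it would need genuinely new input.

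There is a further warning sign. Via the per-class bound $(4M-M^2)/3$, your target would give $N_{s\cup0}\ge\frac{2+C_0}{3}N(T)$. That beats the theorem's $C_2$ and matches what is known only under a narrow-strip hypothesis. Your $C_2$ paragraph is in any case only a sketch.

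The missing idea is to bound $\mathrm{tr}(S^2)$ from below directly, without passing through $\sum_\gamma M_\gamma^2$. The paper takes nested subspaces $U\subset V\subset W$:
\begin{itemize}
\item $U$ is spanned by the $f_x$ for multiple real $x$ and by the $g_z=\frac12(f_z+f_{\bar z})$;
\item $V$ adds the $f_x$ for simple real $x$;
\item $W$ adds the $h_z=\frac1{2i}(f_z-f_{\bar z})$.
\end{itemize}
It chooses an adapted orthonormal basis $(\psi_j)$ with real Gram--Schmidt coefficients. It then applies Bessel's inequality to $F(u,v)=\sum_z f_z(u)f_z(v)$, whose squared $L^2$ norm is $\sum_{z,s}K(z-s)^2$, against the orthonormal system $\psi_j(u)\psi_j(v)$.

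Let $n$ be the number of simple real elements. The resulting real coefficients $\alpha_j$ satisfy
\begin{itemize}
\item $\sum_j\alpha_j=|\mathcal Z|$;
\item $\sum_{j\le\dim U}\alpha_j\ge2\dim U$;
\item $\sum_{\dim U<j\le\dim V}\alpha_j\le n$;
\item $\alpha_j\le0$ for $j>\dim V$.
\end{itemize}
Elementary inequalities then finish the job: $a^2\ge4a-4$, $a^2\ge2a-1$, and, for $C_2$, $a^2\ge2ta-t^2$ with $t=2+\sqrt2$. They yield $\sum_{z,s}K(z-s)^2\ge2|\mathcal Z|-n$ and the three companion inequalities, with no control of individual cross terms needed.

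The paper also removes the weight $w(\rho-\rho')$ from the BGST formula using two fixed test functions, via $Q_\delta-Q_\delta''/(4(\log T)^2)$. Combining this with the Bessel argument gives all four bounds with the Montgomery--Taylor constant.
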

\begin{rem}
The two additional estimates express a form of ``rigidity'' between the simplicity and the horizontal location of zeros. The first shows that, asymptotically, the proportion of zeros that are simultaneously off the critical line and of multiplicity at least two is at most $1-C_2<0.1124$. The second shows in particular that if either $N_s(T)/N(T)$ or $N_0(T)/N(T)$ tends to the lower bound $C_0$, then the other must tend to $1$.
Note that the last estimate implies that $\max(N_0(T), N_s(T))\geq (C_1+o(1))N(T)$. Thus, although our argument does not determine which of the two quantities is larger, it shows that the larger of the proportions of zeros on the critical line and of simple zeros is asymptotically at least
$83.62\%$. Related estimates are obtained in Theorem~3(ii) and (iii) of Baluyot, Goldston, Suriajaya and Turnage-Butterbaugh \cite{BGST25} through their notion of \emph{horizontal multiplicity}, under the hypothesis that, for all large $T$, all zeros with $T<\gamma\leq 2T$ lie in a narrow box of width $b/\log T$ centered on the critical line. In the case $b\to 0$, their Theorem~3(ii) yields the same lower bound
$
\frac{C_0+1}{2}=C_1=0.83625\ldots$
for the average of the proportions of simple zeros and of zeros on the critical line. Their Theorem~3(iii), following an observation of Soundararajan, gives under the same assumptions the lower bound
$
\frac{2+C_0}{3}=0.89083\ldots$
for the proportion of zeros that are simple or lie on the critical line (or both), whereas our approach yields the slightly smaller unconditional proportion $0.88762\ldots$.
\end{rem}
We briefly compare our proof with the argument discovered by Claude and verified by Alp\"oge and Furman \cite{AlFu26}. Their proof starts from Weil's Hermitian form and restricts it to a finite family of test functions, producing a real symmetric matrix of dimension asymptotic to $N(T)$. They then evaluate its trace and the sum of the squares of its entries (that is, the square of its Hilbert--Schmidt norm) and combine these estimates with a rank--trace inequality. To this end, they separate the part of the matrix coming from simple zeros on the critical line from the remaining part. The rank--trace inequality then gives a lower bound for the number of simple zeros on the critical line in terms of $N(T)$, the trace of the matrix, and the square of its Hilbert--Schmidt norm.

Our proof takes a different and more direct route. We avoid the finite-dimensional matrix construction altogether, and replace the linear algebra part of the argument by a single Hilbert space inequality. This reduces the problem directly to an estimate for a pair correlation sum over the zeros, 
to which we apply Montgomery's pair correlation theorem in the unconditional form obtained by Baluyot, Goldston, Suriajaya and Turnage-Butterbaugh (see Lemma 5 of \cite{BGST24}). The Hilbert space formulation makes the underlying structure more transparent and allows additional inequalities to be extracted directly.

Although the two proofs are quite different, they both ultimately reduce the relevant information to the estimation of a certain quadratic form over the zeros. In this sense, both may be viewed as variants of a second-moment argument, and the optimization leads in both cases to the same Montgomery--Taylor extremal problem (see Remark \ref{Rem:MT} below), which explains why the same constants appear in the two proofs.
\subsection*{Acknowledgments} The author is especially grateful to Brian Conrey, Daniel Goldston,  Steve Gonek, Andrew Granville, Micah Milinovich and Gérald Tenenbaum for carefully reading an earlier draft of the manuscript and for their valuable comments. He also thanks Kannan Soundararajan for helpful suggestions, and Ade Irma Suriajaya for helpful comments and discussions. The author is supported by a junior chair of the Institut Universitaire de France. The author is also grateful to Ken Ono and the Axiom Math team for their enthusiasm and for producing, on very short notice and following the completion of this work, the Lean formal certificates described in Appendix A.


\section{The key proposition}

Since non-trivial zeros of the Riemann zeta function are counted with multiplicity, we will be working with multisets instead of sets of zeros.  Montgomery's classical argument in \cite{Mo73}, which shows that at least $2/3$ of the zeros of the zeta function are simple assuming RH, relies on the following basic inequality
\begin{equation}\label{Eq:Inequality1Montgomery}
\sum_{\substack{\rho\\ 0<\gamma\leq T\\m_{\rho}=1}}1
\geq
\sum_{\substack{\rho\\ 0<\gamma\leq T}}(2-m_\rho)
=
2\sum_{\substack{\rho\\ 0<\gamma\leq T}}1
-
\sum_{\substack{\rho,\rho'\\ 0<\gamma, \gamma'\leq T\\\rho=\rho'}}1,
\end{equation}
which is true unconditionally. Therefore, one reduces the problem to obtaining an upper bound for the diagonal contribution on the right hand side of \eqref{Eq:Inequality1Montgomery}. If RH is assumed then one has 
\begin{equation}\label{Eq:InequalityMontgomery2}
\sum_{\substack{\rho,\rho'\\ 0<\gamma, \gamma'\leq T\\\rho=\rho'}}1\leq  \sum_{\substack{\rho,\rho'\\ 0<\gamma, \gamma'\leq T}} K\left(i(\rho-\rho')\frac{\log T}{2\pi}\right)\frac{4}{4-(\rho-\rho')^2},
\end{equation} 
for any kernel\footnote{Since in this case $i(\rho-\rho')= \gamma'-\gamma \in \mathbb{R}$.} $K$ such that $K(0)=1$ and $K(u)\geq 0$ for all $u\in \R$. Assuming RH, Montgomery then computes this pair correlation sum with $K$ being the Fej\'er kernel $K(u)=\left(\frac{\sin(\pi u)}{\pi u}\right)^2$. 
Recent attempts to use this approach unconditionally relied on some form of positivity for $K$ when evaluated at differences of non-trivial zeros that may lie off the critical line. In \cite{BGST25}, Baluyot, Goldston, Suriajaya and
Turnage-Butterbaugh achieved this using a kernel of Tsang whose real part is positive in a fixed horizontal strip. After rescaling by $\log T/(2\pi)$, this positivity applies only if one assumes that all the non-trivial zeros with imaginary part in $[T, 2T]$  lie in a very narrow strip of the form $|\beta-1/2|\leq c/\log T$. This restriction cannot be removed merely by searching for a better kernel. Indeed, for the above method to work without any information on the real parts of the zeros, one would require that the kernel $K$ satisfies
$
\re K(z)\geq0
$
for all $z\in \C$. However, no such nonconstant entire kernel exists.

Our key idea is that one can bypass the inequality \eqref{Eq:Inequality1Montgomery} and directly prove, for a large class of kernels $K$, that the number of simple real elements of a finite multiset $\mc{Z}$ which is invariant under complex conjugation is
$$
\geq 2\sum_{z\in\mc{Z}}1
-
\sum_{z,s\in\mc{Z}}K(z-s)^2,
$$
without requiring that the individual off-diagonal terms $K(z-s)^2$ be non-negative.

For a compactly supported function $f\in L^{1}(\mathbb R)$ we define its Fourier transform $\widehat{f}:\C\to \C$ by
\begin{equation}\label{hat-g}
\widehat{f}(\xi) = \int_{-\infty}^{\infty} f(u)e^{-2\pi i\xi u }\,du, \quad \text{ for all } \xi\in \C.
\end{equation}
Note that this is an entire function since $f$ is compactly supported. 
For an element $z$ of a finite multiset $\mc{Z}$, we denote by $m_z$ its multiplicity. We say that a finite multiset $\mc{Z}$ is \emph{invariant under complex conjugation} if for all $z\in \mc{Z}$ we have $\overline{z}\in \mc{Z}$ and $m_{\overline{z}}=m_z.$  Throughout, sums over a multiset are understood to count elements with their multiplicities. We prove the following key proposition.
\begin{pro}\label{Pro:FourierHilbert}
Let $\lambda>0$ be a real number and $\eta\in L^2(\R)$ be a real valued even function with $\textup{supp} (\eta) \subset (-\lambda, \lambda) $, such that $\widehat{\eta^2}(0)=1$. Let $\mc{Z}$ be a non-empty finite multiset of complex numbers which is invariant under complex
conjugation. For $z\in \mc{Z}$ we let $m_z$ be its multiplicity in $\mc{Z}$.  Then we have 
\begin{equation}\label{Eq:SimpleRealZeros}
 \sum_{\substack{z\in \mc{Z}\cap \R\\ m_z=1}}1\geq 2\sum_{z\in \mc{Z}} 1 -
 \sum_{z,s\in\mathcal Z}K(z-s)^2,  
\end{equation}
where $K$ is the kernel $K(\xi):= \widehat{\eta^2}(\xi).$ Moreover, the number of distinct elements of $\mc{Z}$ is 
\begin{equation}\label{Eq:DistinctZeros}
 \geq \frac{3}{2}\sum_{z\in \mc{Z}} 1 -
 \frac12\sum_{z,s\in\mathcal Z}K(z-s)^2.
\end{equation}
In addition, we have 
\begin{equation}\label{Eq:SumSimpleCritical}
\sum_{\substack{z\in \mc{Z}\\ m_z=1}}1 + \sum_{z\in \mc{Z}\cap \R}1 \geq 3\sum_{z\in \mc{Z}} 1-  \sum_{z,s\in\mathcal Z}K(z-s)^2.
\end{equation}
Finally, if we assume that $\sum_{z,s\in\mathcal Z}K(z-s)^2\leq A \sum_{z\in \mc{Z}} 1$, for some $1\leq A<2$, then we have 
\begin{equation}\label{Eq:SimpleOrCritical}
\sum_{\substack{z\in \mc{Z}\\ m_z=1 \text{  or }z\in \R}}1  \geq \frac{5+2\sqrt{2}-2A}{3+2\sqrt{2}}\sum_{z\in \mc{Z}} 1.
\end{equation}
\end{pro}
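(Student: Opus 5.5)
The plan is to replace the numerical inequality behind \eqref{Eq:Inequality1Montgomery} by an operator inequality in $L^2(-\lambda,\lambda)$. For $z\in\C$ put $v_z(u):=\eta(u)e^{-2\pi i z u}$. Since $\eta$ is real and even, for all $z,s\in\C$
$$
\langle v_z, v_{\overline{s}}\rangle=\int \eta(u)^2 e^{-2\pi i (z-s)u}\,du = K(z-s).
$$
The same facts give $K(-\xi)=K(\xi)$ and $K(\overline{\xi})=\overline{K(\xi)}$, and $\widehat{\eta^2}(0)=1$ gives $\langle v_z,v_{\overline z}\rangle=K(0)=1$. For $z\in\R$ this also reads $\|v_z\|=1$. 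I would then define the finite-rank operator
$$
A:=\sum_{z\in\mc Z} v_z\,\langle\,\cdot\,,v_{\overline z}\rangle .
$$
Invariance of $\mc Z$ under conjugation, with $m_{\overline z}=m_z$, shows $A^*=A$, so $A$ is self-adjoint of finite rank. Moreover $\textup{tr}A=\sum_{z}1=:N$ and
$$
\textup{tr}A^2=\sum_{z,s}\langle v_s,v_{\overline z}\rangle\langle v_z,v_{\overline s}\rangle=\sum_{z,s}K(z-s)^2=:S,
$$
using the evenness of $K$. In particular $S$ is real, and all four statements become statements about the spectrum of a single Hermitian operator.

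Next I group $A$ by distinct values $w$ with multiplicity $m_w$, so that $A=\sum_w m_w v_w\langle\cdot,v_{\overline w}\rangle$. Let $P:=\sum_{w\in\R,\ m_w=1} v_w\langle\cdot,v_w\rangle$ be the part coming from simple real points, and let $R:=A-P$. Then $R$ collects two kinds of blocks. A real $w$ with $m_w\ge2$ contributes the rank-one block $m_w v_wv_w^*$, whose trace $m_w$ is at least $2$. A conjugate pair $w\neq\overline w$ contributes the rank-two self-adjoint block $m_w(v_wv_{\overline w}^*+v_{\overline w}v_w^*)$, of trace $2m_w$. The basic spectral fact I will use is that for any self-adjoint $X$ of rank $r$, applying $\mu^2\ge 2t\mu-t^2$ to each of its $r$ nonzero eigenvalues gives
$$
\textup{tr}X^2\ge 2t\,\textup{tr}X-t^2 r
$$
for every real $t$.

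For \eqref{Eq:DistinctZeros}, the rank of $A$ is at most the number $d$ of distinct elements. This alone gives $d\ge 2N-S$, which is weaker than \eqref{Eq:DistinctZeros} when $S>N$. To reach \eqref{Eq:DistinctZeros} I would estimate the blocks separately, with a different choice of $t$ for each kind of block. A multiple point carries trace $m_w\ge2$ on rank at most $2$, which is where the factor $\tfrac12$ should come from. For \eqref{Eq:SimpleRealZeros} I would compress to $E:=\textup{span}\{v_w: w\in\R,\ m_w=1\}$. Let $Q$ be the orthogonal projection onto $E$. Then
$$
\textup{tr}A^2\ge \textup{tr}(QAQ)^2+\textup{tr}(Q^\perp AQ^\perp)^2 .
$$
Applying the spectral fact with $t=1$ to $QAQ$, whose rank is at most $n_1:=\#\{w\in\R: m_w=1\}$, reduces \eqref{Eq:SimpleRealZeros} to showing that the compressed remainder $Y:=Q^\perp RQ^\perp$ satisfies $\textup{tr}Y^2\ge 2\,\textup{tr}Y$ (up to the appropriate cross terms). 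This is Montgomery's "$m^2\ge 2m$ for $m\ge2$" in operator form, now also covering the non-real pairs, for which the $2\times2$ block has trace $2m_w$.

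The main obstacle is exactly this step. The eigenvalues of the non-real blocks are not controlled individually, because $K(w-\overline w)$ can be large and of either sign. The compression by $Q$ also mixes $P$ and $R$, and I must make sure the off-diagonal part $QAQ^\perp$ only helps. I would first try to avoid compressing: write $\textup{tr}A^2=\textup{tr}P^2+2\,\textup{tr}PR+\textup{tr}R^2$. Then I would bound $\textup{tr}R^2+2\,\textup{tr}PR$ from below by $2\,\textup{tr}R+2\,\textup{tr}PR$ minus something. For this I would use that each non-real block $m_w(v_wv_{\overline w}^*+v_{\overline w}v_w^*)$ has trace $2m_w$ while its Hilbert--Schmidt norm is at least that of its "diagonal" part. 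If that fails, I would fall back on the variational choice of $t$ in the spectral fact, optimized separately for each kind of block.

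Once this block inequality is in place, \eqref{Eq:SumSimpleCritical} should follow from the same decomposition. There one counts real points and simple points separately, with weight $t=1$ on all blocks except the multiple non-real ones. Finally, \eqref{Eq:SimpleOrCritical} should come from a weighted combination of the block estimates with a free parameter $t$. Optimizing $t$ under $S\le AN$ produces the quadratic whose root gives $3+2\sqrt2=(1+\sqrt2)^2$.
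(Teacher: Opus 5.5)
There is a genuine gap, and it sits exactly at the step you call the main obstacle. The setup itself is sound: up to the reflection $u\mapsto -u$, your $A$ is the integral operator with the paper's kernel $F(u,v)=\sum_z f_z(u)f_z(v)$, and $\textup{tr}A=N$, $\textup{tr}A^2=S$ are the paper's identities. The problem is that the reduction you propose for the key step is false, not merely unfinished.

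\textbf{Why the compression onto $E$ fails.} Compressing first onto $E$ loses too much. Take $\mc Z=\{x,y,y\}$ with $x\neq y$ real and $\kappa:=K(x-y)$ satisfying $0<\kappa^2<1/2$. This is possible since $K$ is real and continuous on $\R$, with $K(0)=1$ and $K(\xi)\to0$. Then:
\begin{itemize}
\item $A=v_xv_x^*+2v_yv_y^*$ and $QAQ=(1+2\kappa^2)v_xv_x^*$;
\item $Y=Q^\perp AQ^\perp=2(Q^\perp v_y)(Q^\perp v_y)^*$, with $\|Q^\perp v_y\|^2=1-\kappa^2$;
\item hence $\textup{tr}Y^2=4(1-\kappa^2)^2<4(1-\kappa^2)=2\,\textup{tr}Y$.
\end{itemize}
Worse, the full pinched quantity is $\textup{tr}(QAQ)^2+\textup{tr}Y^2=5-4\kappa^2+8\kappa^4<5=2N-n_1$, even though $\textup{tr}A^2=5+4\kappa^2$. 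So the cross term $2\,\textup{tr}(QAQ^\perp AQ)$ that you discard is indispensable, and no estimate on $Y$ can rescue this route.

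Your plan for \eqref{Eq:DistinctZeros} has the same defect. The per-point blocks are not orthogonal, so their separate estimates cannot just be added. Their cross terms (equivalently $\textup{tr}(PR)$ in your fallback) can be negative because of the $-h_wh_w^*$ parts described below, and you give no mechanism to control them.

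\textbf{The missing idea.} Work with diagonal entries rather than eigenvalues, in a basis adapted to a flag taken in the opposite order.
\begin{itemize}
\item Rewrite each pair block as $m_w(v_wv_{\overline w}^*+v_{\overline w}v_w^*)=2m_w(g_wg_w^*-h_wh_w^*)$, where $g_w=(v_w+v_{\overline w})/2$ and $h_w=(v_w-v_{\overline w})/(2i)$. Note that $\|g_w\|^2-\|h_w\|^2=\re\langle v_w,v_{\overline w}\rangle=1$.
\item Let $U$ be spanned by the $v_w$ with $w\in\R$, $m_w\geq2$ (say $r$ of them), together with one $g_w$ per conjugate pair (say $k$ pairs). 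Then the whole positive part of $R$ lives in $U$, and $\dim U\leq r+k$.
\item Put $V=U+E$ and $W=V+\textup{span}\{h_w\}$. Take an orthonormal basis $(\psi_j)$ of $W$ adapted to $U\subset V\subset W$, and let $\Pi_U$ be the projection onto $U$.
\item Since $A$ is self-adjoint and vanishes on $W^\perp$, the numbers $\alpha_j:=\langle A\psi_j,\psi_j\rangle$ are real, $\sum_j\alpha_j=N$, and $\sum_j\alpha_j^2\leq\textup{tr}A^2=S$.
\end{itemize}

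Three block facts then hold.
\begin{enumerate}
\item On the $U$-block, the sum of the $\alpha_j$ is $\geq\sum_{w\in\R,\,m_w\geq2}m_w+2\sum_{\text{pairs}}m_w\geq2(r+k)$. Here the simple real points contribute $\geq0$, and $\|\Pi_Uh_w\|\leq\|h_w\|$.
\item On $V\ominus U$, the sum of the $\alpha_j$ is $\leq n_1$.
\item On $W\ominus V$, each $\alpha_j\leq0$.
\end{enumerate}

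Applying $a^2\geq4a-4$, $a^2\geq2a-1$ and $a^2\geq0\geq2a$ on the three blocks gives $S\geq2N-n_1$.

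For \eqref{Eq:DistinctZeros}, use instead $a^2\geq4a$ on the last block. On the middle block, use $2\sum\alpha_j-n_1\geq4\sum\alpha_j-3n_1$, which follows from fact 2. This gives $S\geq4N-3n_1-4(r+k)\geq3N-2(n_1+r+k)$, since $N\geq n_1+2(r+k)$, and that yields \eqref{Eq:DistinctZeros}.

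The bounds \eqref{Eq:SumSimpleCritical} and \eqref{Eq:SimpleOrCritical} come from the same three facts with other weights; for the last one, use $t=2+\sqrt2$ on $U$. In your example, this ordering gives $(2+\kappa^2)^2+(1-\kappa^2)^2\geq5$, as it should.
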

We observe that the unconditional version of the pair correlation formula (see Lemma \ref{Lem:BGST} below)  produces the weighted sum containing an extra factor $4/(4-(\rho-\rho')^2)$, rather than the unweighted quadratic form which appears in Proposition \ref{Pro:FourierHilbert}. An additional modification of the test function will be used in Section \ref{Sec:AnalyticPart} to remove this weight.
\begin{rem}\label{Rem:KernelChoice}
There are two reasons for working with $K^2$, instead of $K$, in Proposition \ref{Pro:FourierHilbert}.
 Indeed, since the number of simple real elements of $\mc Z$ is at most $\sum_{z\in\mc Z}1$, the inequality \eqref{Eq:SimpleRealZeros} implies that
$\sum_{z,s\in\mc Z}K(z-s)^2
\geq
\sum_{z\in\mc Z}1
$. This is substantially stronger than positive definiteness alone. The square  is also naturally adapted to the Hilbert space argument in the proof and to the use of Bessel's inequality. 
\end{rem}
\begin{proof}[Proof of Proposition \ref{Pro:FourierHilbert}]
For all $z\in \C$ we define the functions $f_z, g_z, h_z:\R\to \C$ by
\begin{equation}\label{Eq:DefinitionFz}
 f_z(u):=\eta(u) e^{-2\pi i u z}, \quad 
 g_{z}(u) := \frac{f_z(u)+f_{\overline{z}}(u)}{2}, \quad \text{ and } \quad h_{z}(u)= \frac{f_z(u)-f_{\overline{z}}(u)}{2i}.
\end{equation}
Then $g_{\overline{z}}= g_{z}$, $h_{\overline{z}}= -h_z$ and $f_{z}=g_z+ i h_z$.  
Moreover, we observe that
\begin{equation}\label{Eq:KernelFactorization}
K(z-\overline{s})=\widehat{\eta^2}(z-\overline{s})= \int_{-\lambda}^{\lambda}\eta(u)e^{-2\pi i z u}\eta(u)e^{2\pi i\overline{s}u}\,du= \int_{-\lambda}^{\lambda} f_z(u)\overline{f_s(u)}\,du.
\end{equation}
Hence for all $x\in \R$ and $z\in \C$ we have
\begin{equation}\label{Eq:IntegralFz}
\int_{-\lambda}^{\lambda} |f_x(u)|^2du=K(0)=1,
\end{equation}
and 
\begin{align}\label{Eq:IntegralGH}
1&=\int_{-\lambda}^{\lambda} f_z(u)\overline{f_{\overline{z}}(u)} du=\re \int_{-\lambda}^{\lambda}(g_z(u)+ i h_z(u))(\overline{g_z(u)}+ i \overline{h_z(u)})du\nonumber\\
&=\int_{-\lambda}^{\lambda} |g_z(u)|^2 du- \int_{-\lambda}^{\lambda} |h_z(u)|^2 du.
\end{align}
Now for all $(u, v) \in \R^2$ we define
$$
F(u,v):=\sum_{z\in\mathcal Z}f_z(u)f_z(v).
$$
Since $\mc{Z}$ is invariant under complex conjugation we have 
\begin{align}\label{Eq:SecondMoment}
\sum_{z, s\in \mc{Z}} K^2 (z-s)&=\sum_{z, s\in \mc{Z}} K^2 (z-\overline{s})= \sum_{z, s\in \mc{Z}}  \left(\int_{-\lambda}^{\lambda}  f_z(u) \overline{f_s(u)}  \,du \right)^2 \nonumber \\ &= \sum_{z, s\in \mc{Z}} \int_{-\lambda}^{\lambda} \int_{-\lambda}^{\lambda} f_z(u) f_z(v) \overline{f_s(u) f_s(v)} \,du \, dv
=\int_{-\lambda}^{\lambda} \int_{-\lambda}^{\lambda} |F(u, v)|^2 \,du \,dv.
\end{align}

Let $\mc{R}$ be the set of distinct real elements of $\mc{Z}$, and let $\mc{S}$ be the set of its distinct non-real elements. We further split the set $\mc{R}$ into two disjoint subsets $\mc{R}_1$ and $\mc{R}_2$, where $\mc{R}_1$ is the set of simple real elements of $\mc{Z}$, and $\mc{R}_2$ is the set of those elements which are real and have multiplicity $\geq 2$. Since $\mc{Z}$ is invariant under complex conjugation we can list the elements of these sets as 
$$
\mc{R}_1= \{x_1, \dots, x_n\}, \quad \mc{R}_2= \{x_{n+1}, \dots, x_{n+r}\}, \ \text{ and } \ \mc{S}=\{z_1, \overline{z}_1, \dots, z_k, \overline{z}_k\}.
$$
Grouping the non-real elements by pairs of complex conjugates we deduce that 
\begin{align}\label{Eq:IdentityFUV}
F(u, v) 
&= \sum_{\ell=1}^{n+ r} m_{x_\ell}f_{x_\ell}(u)f_{x_\ell}(v)+ \sum_{\ell=1}^k  m_{z_\ell} \left(f_{z_\ell}(u)f_{z_\ell}(v)+ f_{\overline{z_\ell}}(u)f_{\overline{z_\ell}}(v)\right)\nonumber\\
&= \sum_{\ell=1}^{n+ r} m_{x_\ell}f_{x_\ell}(u)f_{x_\ell}(v)+ 2\sum_{\ell=1}^k  m_{z_\ell} \left(g_{z_\ell}(u) g_{z_\ell}(v)- h_{z_\ell}(u) h_{z_\ell}(v)\right).
\end{align}
We now consider the following nested finite dimensional subspaces of the Hilbert space $L^2((-\lambda, \lambda), \C)$, endowed with its usual inner product $\langle f, g\rangle = \int_{-\lambda}^{\lambda} f(u)\overline{g(u)}du$:
$$
U=\textup{Span}\big(f_{x_{n+1}}, \dots, f_{x_{n+r}}, g_{z_1}, \dots, g_{z_k}\big), \quad V=\textup{Span}\big(f_{x_1}, \dots, f_{x_{n+r}}, g_{z_1}, \dots, g_{z_k}\big), 
$$ and 
$$ W=\textup{Span}\big(f_{x_1}, \dots, f_{x_{n+r}}, g_{z_1}, \dots, g_{z_k}, h_{z_1}, \dots, h_{z_k}\big).$$
We also put 
$$D_U:=\dim U, \quad D_V:= \dim V \ \text{ and } \ D_W:= \dim W. $$
 Note that for distinct complex numbers $z_1, \dots, z_{\ell}$, the functions $e^{z_1 u}, e^{z_2 u}, \dots, e^{z_\ell u}$ are linearly independent over $\C$. Therefore, since the set on which $\eta$ is non-zero has positive measure (because $\widehat{\eta^2}(0)=1$), it follows that $f_{x_1}, \ldots, f_{x_{n+r}}, g_{z_1}, \ldots, g_{z_{k}}, h_{z_1}, \ldots, h_{z_k}$ are linearly independent over $\C$. Hence, we get $D_U=r+k$, $D_V=n+r+k$ and $D_W=n+r+2k$.
One can also observe that all of the functions $f_{x_{\ell}}, g_{z_{\ell}}$ and $h_{z_{\ell}}$ satisfy the following property
\begin{equation}\label{Eq:PropertyOverline}
\overline{\Phi(u)}= \Phi(-u). 
\end{equation} 
Moreover, if $\Phi_1$ and $\Phi_2$ satisfy \eqref{Eq:PropertyOverline} then 
\begin{align}\label{Eq:RealIntegral}
\overline{\langle \Phi_1, \Phi_2\rangle} 
 &=\int_{-\lambda}^{\lambda} \overline{\Phi_1(u)}\Phi_2(u)\,du
 =\int_{-\lambda}^{\lambda} \Phi_1(-u)\overline{\Phi_2(-u)}\,du,\nonumber\\
 &=\int_{-\lambda}^{\lambda} \Phi_1(u)\overline{\Phi_2(u)}\,du
=\langle \Phi_1, \Phi_2\rangle 
\end{align}
by a simple change of variables.
We now apply the standard Gram-Schmidt process to construct an orthonormal basis $(\psi_1, \dots, \psi_{D_W})$ of $W$, such that  $(\psi_1, \dots, \psi_{D_U})$ is a basis for $U$ and $(\psi_1, \dots, \psi_{D_V})$ is a basis for $V$. Since the original spanning family satisfies \eqref{Eq:PropertyOverline}, the identity \eqref{Eq:RealIntegral} shows inductively that all the Gram–Schmidt coefficients are real. Hence every $\psi_j$ also satisfies \eqref{Eq:PropertyOverline}, and so all the inner product coefficients appearing below are real. Furthermore, the functions $\Psi_j:\R^2\to \mathbb{C}$, defined by $\Psi_j(u, v)=\psi_j(u)\psi_j(v)$, are orthonormal in $L^2\big((-\lambda, \lambda)^2, \C\big)$. Indeed, for $1\leq j,\ell\leq D_W$ we have
$$ \int_{-\lambda}^{\lambda}\int_{-\lambda}^{\lambda} \psi_j(u)\psi_j(v) \overline{\psi_{\ell}(u)\psi_{\ell}(v)}du dv= \bigg(\int_{-\lambda}^{\lambda}\psi_j(u)\overline{\psi_{\ell}(u)} du\bigg)^2.$$
 Hence by Bessel's inequality we get
\begin{equation}\label{Eq:OrthogonalInequality}
\int_{-\lambda}^{\lambda}\int_{-\lambda}^{\lambda} |F(u, v)|^2 du dv \geq \sum_{j=1}^{D_W}|\alpha_j|^2,
\end{equation}
where 
\begin{align}\label{Eq:DefinitionAlpha}
\alpha_j&:= \int_{-\lambda}^{\lambda}\int_{-\lambda}^{\lambda} F(u, v) \overline{\psi_j(u)\psi_j(v)}\,du \,dv \nonumber\\
&= \sum_{\ell=1}^{n+ r} m_{x_{\ell}}\left(\int_{-\lambda}^{\lambda} f_{x_{\ell}}(u)\overline{\psi_j(u)}du\right)^2\nonumber\\
& \qquad \qquad+ 2\sum_{\ell=1}^k  m_{z_{\ell}} \left( \left(\int_{-\lambda}^{\lambda} g_{z_\ell}(u)\overline{\psi_j(u)}du\right)^2- \left(\int_{-\lambda}^{\lambda} h_{z_\ell}(u)\overline{\psi_j(u)}du\right)^2\right),
\end{align}
and where the last identity follows from \eqref{Eq:IdentityFUV}. Note that $\alpha_j$ is real by \eqref{Eq:RealIntegral}. 

To prove the inequalities \eqref{Eq:SimpleRealZeros}, \eqref{Eq:DistinctZeros}, \eqref{Eq:SumSimpleCritical} and \eqref{Eq:SimpleOrCritical} we shall establish the corresponding lower bounds for the sum $\sum_{1\leq j\leq D_W} \alpha_j^2$. To this end we split this sum into three parts $1\leq j\leq D_U$, $D_U+1\leq j\leq D_V$ and $D_V+1\leq j\leq D_W$. We assume that these three ranges are all non-empty, otherwise the corresponding sum will just be $0$. 

 We shall first record several important estimates concerning sums of the $\alpha_j$ in these three ranges. Since $f_{x_{\ell}}\in U$ for all $n+1\leq \ell\leq n+r$ and $g_{z_{d}}\in U$ for all $1\leq d \leq k$ then by Parseval's theorem and \eqref{Eq:IntegralFz} we obtain 
\begin{align*} 
\sum_{j=1}^{D_U} \left(\int_{-\lambda}^{\lambda} f_{x_{\ell}}(u) \overline{\psi_j(u)} du\right)^2&= \int_{-\lambda}^{\lambda} |f_{x_{\ell}}(u)|^2 du=1,\\
\textup{ and } \quad \sum_{j=1}^{D_U} \left(\int_{-\lambda}^{\lambda} g_{z_{d}}(u) \overline{\psi_j(u)} du\right)^2&= \int_{-\lambda}^{\lambda} |g_{z_{d}}(u)|^2 du.
\end{align*}
Furthermore, by Bessel's inequality we have  
$$ 
\sum_{j=1}^{D_U} \left(\int_{-\lambda}^{\lambda} h_{z_{\ell}}(u) \overline{\psi_j(u)} du\right)^2\leq  \int_{-\lambda}^{\lambda} |h_{z_{\ell}}(u)|^2 du,
$$
for all $1\leq \ell\leq k$. Combining the above three estimates with the definition of $\alpha_j$ in \eqref{Eq:DefinitionAlpha} together with the identities \eqref{Eq:IntegralFz} and \eqref{Eq:IntegralGH} we get
\begin{align}\label{Eq:FirstSumAlphaj}
\sum_{j=1}^{D_U} \alpha_j & = \sum_{\ell=1}^{n+ r} m_{x_{\ell}}\sum_{j=1}^{D_U} \left(\int_{-\lambda}^{\lambda} f_{x_{\ell}}(u)\overline{\psi_j(u)}du\right)^2\nonumber\\
& \qquad \qquad+ 2\sum_{\ell=1}^k  m_{z_{\ell}}  \bigg(\sum_{j=1}^{D_U} \bigg(\int_{-\lambda}^{\lambda} g_{z_\ell}(u)\overline{\psi_j(u)}du\bigg)^2- \sum_{j=1}^{D_U}\bigg(\int_{-\lambda}^{\lambda} h_{z_\ell}(u)\overline{\psi_j(u)}du\bigg)^2\bigg) \\
& \geq  \sum_{\ell=n+1}^{n+ r} m_{x_{\ell}}+ 2\sum_{\ell=1}^k  m_{z_{\ell}} 
\geq 2 (r+k), \nonumber
\end{align} 
since $m_{x_{\ell}}\geq 2$ for all $n+1\leq \ell \leq n+r$. For the second range $D_U+1\leq j\leq D_V$ we note that $\psi_j$ is orthogonal to $U$ and hence by \eqref{Eq:DefinitionAlpha} we obtain 
\begin{align}\label{Eq:SumAlpha_jSecondRange}\sum_{j=D_U+1}^{D_V} \alpha_j &= \sum_{\ell=1}^{n} \sum_{j=D_U+1}^{D_V}\left(\int_{-\lambda}^{\lambda} f_{x_\ell}(u) \overline{\psi_{j}(u)}du \right)^2- 2\sum_{\ell=1}^k  m_{z_{\ell}} \sum_{j=D_U+1}^{D_V}\bigg(\int_{-\lambda}^{\lambda} h_{z_\ell}(u)\overline{\psi_j(u)}du\bigg)^2 \nonumber
\\&\leq \sum_{\ell=1}^n \int_{-\lambda}^{\lambda}|f_{x_\ell}(u)|^2 du= n,
\end{align}
by Bessel's inequality and \eqref{Eq:IntegralFz}.
Now, in the last range $D_V+1\leq j\leq D_W$, the functions $\psi_j$ are orthogonal to $V$ and hence to all of the $f_{x_{\ell}}$ and the $g_{z_{\ell}}$. Therefore we obtain from \eqref{Eq:DefinitionAlpha} that 
\begin{equation}\label{Eq:NegativeAlphaj}
\alpha_j= -2 \sum_{\ell=1}^k m_{z_\ell} \left(\int_{-\lambda}^{\lambda} h_{z_\ell}(u)\overline{\psi_j(u)}du\right)^2\leq 0.
\end{equation}
Finally, we estimate the total sum $\sum_{1\leq j\leq D_W} \alpha_j$. Using the definition of $\alpha_j$ in \eqref{Eq:DefinitionAlpha} together with Parseval's theorem and the identities  \eqref{Eq:IntegralFz} and \eqref{Eq:IntegralGH} we find that 
\begin{equation}\label{Eq:SumAlphaj}
\sum_{j=1}^{D_W} \alpha_j= \sum_{\ell=1}^{n+ r} m_{x_{\ell}}+ 2\sum_{\ell=1}^{k} m_{z_{\ell}}= \sum_{z\in \mc{Z}}1.
\end{equation}

We start by proving \eqref{Eq:SimpleRealZeros}. 
In the first range we use the basic inequality $a^2+4\geq 4a$, valid for all real numbers $a$, together with \eqref{Eq:FirstSumAlphaj}. This gives 
\begin{equation}\label{Eq:FirstRange0}
\sum_{j=1}^{D_U} \alpha_j^2 \geq  4\sum_{j=1}^{D_U} \alpha_j -4 (k+r) \geq  2\sum_{j=1}^{D_U} \alpha_j,
\end{equation}
since $D_U=k+r$.  In the second range we only use the basic inequality $a^2+1\geq 2a,$ valid for all real numbers $a$, to obtain
\begin{equation}\label{Eq:SecondRange0}
  \sum_{j=D_U+1}^{D_V} \alpha_j^2\geq 2\sum_{j=D_U+1}^{D_V}\alpha_j - n= 2\sum_{j=D_U+1}^{D_V}\alpha_j -\sum_{\substack{z\in \mc{Z}\cap \R\\ m_z=1}}1.
\end{equation}
 Finally, in the third range we use \eqref{Eq:NegativeAlphaj} which trivially gives
\begin{equation}\label{Eq:ThirdRange}
\sum_{j=D_V+1}^{D_W} \alpha_j^2\geq 2 \sum_{j=D_V+1}^{D_W} \alpha_j. 
\end{equation}
Summing the inequalities \eqref{Eq:FirstRange0}, \eqref{Eq:SecondRange0} and \eqref{Eq:ThirdRange} and using \eqref{Eq:OrthogonalInequality} we derive 
\begin{equation}\label{Eq:MasterInequality}
\int_{-\lambda}^{\lambda}\int_{-\lambda}^{\lambda} |F(u, v)|^2 du dv \geq \sum_{j=1}^{D_W}\alpha_j^2\geq 2\sum_{j=1}^{D_W} \alpha_j- \sum_{\substack{z\in \mc{Z}\cap \R\\ m_z=1}}1.
\end{equation}
Inserting  \eqref{Eq:SumAlphaj} in \eqref{Eq:MasterInequality} and combining it with \eqref{Eq:SecondMoment} completes the proof of \eqref{Eq:SimpleRealZeros}. 

We now establish \eqref{Eq:DistinctZeros}. Using our notation above, the number of distinct elements of $\mc{Z}$ is $n+r+2k.$ We will proceed similarly to the proof of \eqref{Eq:SimpleRealZeros}, but we shall change our treatment for the first and second ranges $1\leq j\leq D_U$ and $D_U+1\leq j\leq D_V$. In the first range we only use the first inequality of \eqref{Eq:FirstRange0}. In the second range we use \eqref{Eq:SumAlpha_jSecondRange} and \eqref{Eq:SecondRange0} which imply
\begin{equation}\label{Eq:SecondRangeDistinct}
    \sum_{j=D_U+1}^{D_V} \alpha_j^2\geq 2\sum_{j=D_U+1}^{D_V}\alpha_j -n\geq 4\sum_{j=D_U+1}^{D_V}\alpha_j- 3n.
\end{equation}
Finally, in the last range $D_V+1\leq j\leq D_W$ we use \eqref{Eq:NegativeAlphaj} which implies the trivial inequality
\begin{equation}\label{Eq:ThirdRangeDistinct}
    \sum_{j=D_V+1}^{D_W} \alpha_j^2\geq 4 \sum_{j=D_V+1}^{D_W} \alpha_j.
\end{equation}
Summing the first inequality of \eqref{Eq:FirstRange0} together with the inequalities \eqref{Eq:SecondRangeDistinct} and \eqref{Eq:ThirdRangeDistinct} gives 
$$ \sum_{j=1}^{D_W} \alpha_j^2\geq 4 \sum_{j=1}^{D_W} \alpha_j- 3n-4(k+r).$$
On the other hand by \eqref{Eq:SumAlphaj} we have 
$\sum_{j=1}^{D_W} \alpha_j= \sum_{z\in \mc{Z}} 1\geq n+2r+2k.$ Thus we deduce that 
$$ \sum_{j=1}^{D_W} \alpha_j^2 \geq 4 \sum_{z\in \mc{Z}} 1- 3n-4(k+r) \geq 3\sum_{z\in \mc{Z}} 1 - 2(n+r+k).
$$
Combining this with \eqref{Eq:SecondMoment} and \eqref{Eq:OrthogonalInequality} yields
$$ n+r+2k\geq \frac{3}{2}\sum_{z\in \mc{Z}} 1- \frac12 \sum_{z, s\in \mc{Z}} K(z-s)^2,$$
as desired.

We now prove \eqref{Eq:SumSimpleCritical}. To this end we split the set $\mc{S}$ of the distinct non-real elements of $\mc{Z}$ into two disjoint subsets $\mc{S}_1$ and $\mc{S}_2$, where $\mc{S}_1$ is the set of simple non-real elements of $\mc{Z}$, and $\mc{S}_2$ is the set of those elements which are non-real and have multiplicity $\geq 2$. Put $|\mc{S}_1|=2p$ and $|\mc{S}_2|=2q$, so that $p+q=k$. Since the number of simple elements of $\mc{Z}$ is $n+2p$, and the number of real elements of $\mc{Z}$ counted with multiplicity is $\geq n+2r,$ it suffices to prove the desired lower bound for $2n+2r+2p$.  By \eqref{Eq:FirstSumAlphaj} we have 
\begin{equation}\label{Eq:SumAlphajFirstLarge}
   \sum_{j=1}^{D_U} \alpha_j\geq \sum_{\ell=n+1}^{n+ r} m_{x_{\ell}}+ 2\sum_{\ell=1}^k  m_{z_{\ell}}\geq 2r+2p+4q.  
\end{equation}
Combining this bound with the first inequality of \eqref{Eq:FirstRange0}, together with the inequalities \eqref{Eq:SumAlpha_jSecondRange}, \eqref{Eq:NegativeAlphaj}, and \eqref{Eq:SecondRange0}  we deduce that 
\begin{align}\label{Eq:EquationSimple+Real}
    \sum_{j=1}^{D_W} \alpha_j^2 & \geq \left(4\sum_{j=1}^{D_U}\alpha_j- 4(r+ p+q)\right) + \left(2\sum_{j=D_U+1}^{D_V}\alpha_j -n \right)+3\sum_{j=D_V+1}^{D_W}\alpha_j \nonumber\\
    & \geq 3\sum_{j=1}^{D_W} \alpha_j- (2r+2p+2n). 
\end{align}
Therefore, \eqref{Eq:SumSimpleCritical} follows upon combining this inequality with \eqref{Eq:SecondMoment}, \eqref{Eq:OrthogonalInequality} and \eqref{Eq:SumAlphaj}. 

Finally, we prove \eqref{Eq:SimpleOrCritical}. Here the number of elements of $\mc{Z}$ which are real or simple (or both) is $\geq n+2r+ 2p$, so we are going to seek a lower bound for this quantity. Let $t>2$ be a parameter to be chosen. In the first range we use the basic inequality $a^2\geq 2a t-t^2,$ valid for all $a\in \R$ to get
\begin{equation}\label{Eq:FirstRange3}
\sum_{j=1}^{D_U}\alpha_j^2\geq 2t\sum_{j=1}^{D_U} \alpha_j - t^2(r+p+q).
\end{equation}
In the second and third ranges we use the exact same ingredients as before, namely \eqref{Eq:NegativeAlphaj} and  \eqref{Eq:SecondRange0}. Then similarly to \eqref{Eq:EquationSimple+Real} we get
$$
\sum_{j=1}^{D_W} \alpha_j^2 \geq \left(2 t\sum_{j=1}^{D_U}\alpha_j- t^2(r+ p+q)\right) + \left(2\sum_{j=D_U+1}^{D_V}\alpha_j -n \right)+A\sum_{j=D_V+1}^{D_W}\alpha_j.
$$
By our assumption together with \eqref{Eq:SecondMoment}, \eqref{Eq:OrthogonalInequality} and \eqref{Eq:SumAlphaj} we have $\sum_{j=1}^{D_W} \alpha_j^2\leq A \sum_{j=1}^{D_W}\alpha_j$. Therefore we deduce that 
$$
(2 t- A)\sum_{j=1}^{D_U}\alpha_j- t^2(r+ p+q) + (2-A)\sum_{j=D_U+1}^{D_V}\alpha_j -n\leq 0.
$$
Our goal now is to choose the parameter $t$ in such a way as to balance the weights of the first and second sums of $\alpha_j$ in order to cancel the contribution of $q$ and obtain a multiple of $n+2r+2p$. To this end let $b_1, b_2$ and $b_3$ be positive real numbers such that $b_1+ b_2= 2t-A$ and $b_1-b_3= 2-A$. Then, using \eqref{Eq:SumAlpha_jSecondRange} and \eqref{Eq:SumAlphajFirstLarge} we deduce that 
\begin{equation}\label{Eq:EquilibreSumAlphaj}
b_1 \sum_{j=1}^{D_V} \alpha_j + b_2(2r+2p+4q)-t^2(r+p+q)-(b_3+1) n \leq 0. 
\end{equation}
We now choose $b_1, b_2$ and $b_3$ such that $4b_2=t^2$ and $t^2/2-b_2= b_3+1$ and hence $b_3= t^2/4-1.$ On the other hand we have $b_1+b_2=2t-A$ and $b_1-b_3=2-A$ and hence $b_2+b_3=2t-2.$ This shows that our parameter $t$ satisfies the equation $t^2-4t+2=0$ and hence $t=2+\sqrt{2}$ since $t>2$. With this choice we have $b_1= 5/2+\sqrt{2}-A$, $b_2=3/2+\sqrt{2}$
and $b_3=1/2+\sqrt{2}.$ Inserting these in \eqref{Eq:EquilibreSumAlphaj} and using \eqref{Eq:NegativeAlphaj} and \eqref{Eq:SumAlphaj} we get 
$$ \left(\frac52+\sqrt{2}-A\right)\sum_{z\in \mc{Z}} 1\leq \left(\frac52+\sqrt{2}-A\right)\sum_{j=1}^{D_V} \alpha_j \leq \left(\frac32+\sqrt{2}\right)(n+2r+2p). $$
This completes the proof.
\end{proof}


\section{Proof of Theorem \ref{Thm:Main}}\label{Sec:AnalyticPart}

For complex numbers $z$ with $|\re(z)|<2$ we define
$$
w(z):=\frac{4}{4-z^2}.
$$
We shall use the following unconditional pair correlation formula established by Baluyot, Goldston, Suriajaya and
Turnage-Butterbaugh in \cite{BGST24}. Its proof follows the same argument as Montgomery's original proof under RH \cite{Mo73}, while an unconditional form of the special case corresponding to the Fej\'er kernel was previously obtained by Aryan \cite{Ary22}.

\begin{lem}[Lemma 5 of \cite{BGST24}]\label{Lem:BGST}  Let $f\in L^{1}(\mathbb R)$ be a real-valued even function with support in $[-1,1]$, and suppose that $f$ is Lipschitz continuous at $x =0$ (that is, $
f(x)-f(0)=O(|x|)$ as $x\to 0$). Then we have
\begin{align*}
\sum_{\substack{\rho, \rho' \\ 0<\gamma,\gamma'\le T}} \widehat{f}\left(i(\rho -\rho')\frac{\log T}{2\pi}\right) w(\rho-\rho') = \frac{T}{2\pi}\log T
\left(f(0) + 2\int_{0}^1 \alpha f(\alpha)\,d\alpha+O_f\left(\frac1{\sqrt{\log T}}\right)\right).
\end{align*}
\end{lem}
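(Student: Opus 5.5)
The plan is to follow Montgomery's original argument \cite{Mo73}, keeping every zero $\rho=\beta+i\gamma$ as a complex number throughout instead of writing it as $\tfrac12+i\gamma$. The point is that the proof never needs to know $\beta$: it only uses the explicit formula, a mean value theorem for Dirichlet polynomials, and Fourier inversion. The first step is an unconditional Montgomery identity. Apply the explicit formula for $\zeta'/\zeta$ to a smoothed sum over primes at $s=\tfrac32+it$ and at $s=-\tfrac12+it$, and subtract. For $x\geq 1$ and real $t$ this expresses
$$
\sum_{\rho} \frac{2\,x^{\rho-1/2}}{1-(\rho-\tfrac12-it)^2}
$$
as $-x^{-1/2}$ times a Dirichlet polynomial. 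That polynomial has terms $\Lambda(n)(x/n)^{-1/2+it}$ for $n\leq x$ and $\Lambda(n)(x/n)^{3/2+it}$ for $n>x$. The identity also contains a term $x^{-1+it}\log(|t|+2)$ and an admissible error. The weight $2/(1-(\rho-\tfrac12-it)^2)$ is the complexified Poisson kernel. Under RH it becomes $2/(1+(t-\gamma)^2)$, so no positivity is needed at this stage.

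The second step is to define
$$
F(x,T):=\frac{2\pi}{T\log T}\int_0^T \Bigl|\sum_{\rho}\frac{2\,x^{\rho-1/2}}{1-(\rho-\tfrac12-it)^2}\Bigr|^2dt
$$
and evaluate it in two ways. On the zero side, I expand the square and extend the $t$-integral to $\R$, at a cost controlled by the density of zeros. A residue computation then gives
$$
\int_{\R}\frac{4\,dt}{(1-(\rho-\tfrac12-it)^2)\,\overline{(1-(\rho'-\tfrac12-it)^2)}}=2\pi\, w(\rho-\rho')\ \text{(up to the conjugation convention)}.
$$
The conjugate of the factor for $\rho'$ produces $\bar\rho'$ rather than $\rho'$. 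To fix this I will reindex with the symmetry $\rho'\mapsto 1-\bar\rho'$ of the zero multiset, which comes from the functional equation. After this change the product of powers of $x$ becomes $x^{\rho-\rho'}=x^{i\cdot i(\rho'-\rho)}$. With $x=T^{\alpha}$ this gives $F(T^{\alpha},T)=\frac{2\pi}{T\log T}\sum_{\rho,\rho'} T^{\alpha(\rho-\rho')}w(\rho-\rho')+o(1)$.

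On the prime side, the Montgomery--Vaughan mean value theorem together with the prime number theorem gives, uniformly for $0\leq\alpha\leq 1$,
$$
F(T^\alpha,T)=\bigl(1+o(1)\bigr)T^{-2\alpha}\log T+\alpha+o(1).
$$
The $T^{-2\alpha}\log T$ term comes from the $x^{-1+it}\log t$ part and from cross terms, and the $\alpha$ term from the diagonal of the Dirichlet polynomial. By the $\rho\leftrightarrow\rho'$ symmetry $F$ extends evenly to $\alpha<0$. This extension is consistent because $f$ is even.

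The third step is Fourier inversion. Multiply by $f(\alpha)$ and integrate over $[-1,1]$. On the zero side, $\int f(\alpha)T^{\alpha(\rho-\rho')}d\alpha=\widehat f\bigl(i(\rho-\rho')\tfrac{\log T}{2\pi}\bigr)$, which produces the left-hand side of the lemma. On the prime side, the term $\int f(\alpha)\,T^{-2|\alpha|}\log T\,d\alpha$ equals $f(0)+O(1/\log T)$, and this is exactly where the Lipschitz condition at $0$ is used. The term $\int f(\alpha)|\alpha|\,d\alpha$ equals $2\int_0^1\alpha f(\alpha)\,d\alpha$.

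I expect the main obstacle to be the error terms, and in particular getting them uniform in $\alpha$ without RH. Off the line the factors $x^{\beta+\beta'-1}$ are not bounded by $1$ individually. Controlling them requires bounding the remainder in the explicit formula and the tail of $t$ outside $[0,T]$ in mean square only. For this I would use Cauchy--Schwarz against the main term, where the main term is $O(1)$ in mean square by the prime-side evaluation. That is the natural source of the $O_f(1/\sqrt{\log T})$ loss: an $L^2$ error of size $O(1/\log T)$ becomes a $1/\sqrt{\log T}$ error after Cauchy--Schwarz.
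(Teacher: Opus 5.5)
Your route is the one the paper has in mind. The paper gives no proof of this lemma beyond citing BGST24 and noting that it follows Montgomery's argument, and your plan is exactly Montgomery's argument with $\rho$ kept complex. The algebra is right. After the substitution $\rho''=1-\bar\rho'$, the integral over $\R$ of the two kernels is exactly $2\pi w(\rho-\rho'')$ and the powers of $x$ combine to $x^{\rho-\rho''}$. The Fourier inversion produces $\widehat f(i(\rho-\rho')\frac{\log T}{2\pi})$, and the Lipschitz condition enters where you say. One small slip: because of that factor $2\pi$, your normalization $\frac{2\pi}{T\log T}$ makes both of your displayed formulas for $F$ off by the same factor $2\pi$. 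The two errors cancel, but you should normalize by $\frac{1}{T\log T}$.

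The gap is at the endpoint $\alpha=1$, in passing from $\int_0^T|L(x,t)|^2\,dt$ (all zeros) to $\int_{\R}|L_T(x,t)|^2\,dt$ (only $0<\gamma\le T$).

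\emph{Why Cauchy--Schwarz is not enough.} It disposes of the cross terms only after you have bounded the mean squares of the truncation errors themselves, namely $\int_T^\infty|L_T|^2\,dt$ and $\int_0^T|\sum_{\gamma>T}(\cdots)|^2\,dt$. These are governed by the zeros near height $T$; near $t=0$ the trivial bound costs only $O(x)$. Using only $\beta\le 1$ you get $|L_T(x,t)|\ll x^{1/2}\log T/(1+t-T)$ for $t\ge T$. Hence these mean squares are $O(x\log^2 T)$, a relative error $T^{\alpha-1}\log T$. This is harmless for $\alpha\le 1-C\log\log T/\log T$, but it is of size $\log T$ at $\alpha=1$. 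After integrating against an $f$ that does not vanish at $\pm1$ (e.g.\ $f=\mathbf{1}_{[-1,1]}$), it leaves an error of order $1$ inside the bracket, not $O(1/\sqrt{\log T})$.

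\emph{Why your sketch cannot exclude this.} The vertical density of zeros allows $\asymp\log T$ zeros with $\beta$ near $1$ in a unit window at height $T$. Global density estimates say nothing about a single window. The prime-side mean square of $L$ near $T$ does not control the partial sum $L_T$, since zeros just below and just above $T$ may cancel in $L$.

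\emph{How to close it.} You need a genuine input on zeros near height $T$. For example, the Vinogradov--Korobov zero-free region gives $x^{\beta-1/2}\le x^{1/2}\exp(-c\log x/(\log T)^{2/3+\varepsilon})$ for $\gamma\asymp T$. This makes the tails $O(T(\log T)^{-A})$ uniformly for $x\le T$; the classical region only saves a constant factor. Alternatively, prove the lemma only for $f$ supported in $[-1+\varepsilon,1-\varepsilon]$, where your trivial bounds suffice. That is all the paper uses, since $Q_\delta,Q_\delta''\in C_c^\infty((-1,1))$.

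\emph{The prime side has a similar endpoint issue.} The form $\sum|a_n|^2(T+O(n))$ of the Montgomery--Vaughan theorem leaves an error $O(x\log x)$, comparable to the main term at $x=T$. For uniformity up to $\alpha=1$ you should quote the sharper unconditional prime-side treatment, as in Montgomery and Goldston--Montgomery. That treatment uses the refined inequality with the spacings $\delta_n$ together with a sieve bound for prime pairs.
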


To combine Proposition~\ref{Pro:FourierHilbert} with the pair correlation formula in Lemma~\ref{Lem:BGST}, we must remove the weight $w(\rho-\rho')$.
Indeed, let $\eta\in C_c^\infty((-1/2,1/2))$ be a real valued even function such that $\widehat{\eta^2}(0)=1$. Define $K:=\widehat{\eta^2}$ and 
$
Q:=\eta^2*\eta^2,
$ where here and throughout we define
$$
(f*g)(x):=\int_{-\infty}^{\infty} f(u)g(x-u)\,du
$$
for all $f, g\in L^2(\R)$.
Then $\widehat Q=K^2$ and $Q$ is supported in $[-1, 1]$. Hence  a direct application of Lemma~\ref{Lem:BGST} gives an asymptotic formula for the weighted sum
$$
\sum_{\substack{\rho,\rho'\\0<\gamma,\gamma'\leq T}}
K\left(
i(\rho-\rho')\frac{\log T}{2\pi}
\right)^2
w(\rho-\rho').
$$
On the other hand, in order to use Proposition~\ref{Pro:FourierHilbert} we need to estimate the corresponding sum without the factor $w(\rho-\rho')$. This factor cannot be removed by directly applying Lemma~\ref{Lem:BGST} to a different test function. Indeed, writing
$
z=i(\rho-\rho')\frac{\log T}{2\pi},
$
we have
$$
w(\rho-\rho')
=
\left(
1+\frac{\pi^2z^2}{(\log T)^2}
\right)^{-1}.
$$
Thus, the identity
$$
K(z)^2=\widehat f(z)w(\rho-\rho')
$$
would require
$$
\widehat f(z)
=
\left(
1+\frac{\pi^2z^2}{(\log T)^2}
\right)K(z)^2,
$$
which depends on $T$. Hence, Lemma~\ref{Lem:BGST}  cannot be applied directly to such a test function without an additional uniformity argument. To overcome this we express the desired kernel as a linear combination of two fixed test functions to which Lemma~\ref{Lem:BGST} can be applied separately.

\begin{lem}\label{Lem:ConstructionKernel}
Put
\begin{equation}\label{Eq:MontgomeryTaylorConstant}
C_{\mathrm{MT}}
:=
\frac12+\frac1{\sqrt2}
\cot\left(\frac1{\sqrt2}\right)
=
1.3274992963205\ldots.    
\end{equation}
Then for every $\varepsilon>0$, there exists a real valued even function
$
\eta_\varepsilon\in C_c^\infty((-1/2,1/2))
$
such that, for the kernel
$
K_\varepsilon(z):=\widehat{\eta_\varepsilon^2}(z),
$
we have
$
K_\varepsilon(0)=1,
$
and
$$
\sum_{\substack{\rho,\rho'\\0<\gamma,\gamma'\leq T}}
K_\varepsilon\left(
i(\rho-\rho')\frac{\log T}{2\pi}
\right)^2
=
\left(
C_{\eta_\varepsilon}
+
O_{\varepsilon}\left(
\frac{1}{\sqrt{\log T}}
\right)
\right)
\frac{T}{2\pi}\log T,
$$
where
$
\left|C_{\eta_\varepsilon}-C_{\mathrm{MT}}\right|
<\varepsilon.
$
\end{lem}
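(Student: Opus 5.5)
The plan is to remove the weight $w$ algebraically, using the identity $1=w(\rho-\rho')\,w(\rho-\rho')^{-1}$, and then to choose $\eta_\varepsilon$ close to the Montgomery--Taylor extremal function. Write $L=\log T$ and $z=i(\rho-\rho')L/(2\pi)$. Then $(\rho-\rho')^2=-4\pi^2z^2/L^2$, so
$$
w(\rho-\rho')^{-1}=1+\frac{\pi^2z^2}{L^2}.
$$
For any real even $\eta\in C_c^\infty((-1/2,1/2))$ with $\widehat{\eta^2}(0)=1$, put $K=\widehat{\eta^2}$ and $Q=\eta^2*\eta^2$. This gives the exact identity
$$
K(z)^2=K(z)^2w(\rho-\rho')+\frac{\pi^2}{L^2}\,z^2K(z)^2\,w(\rho-\rho').
$$
Since $Q$ is smooth and compactly supported, integration by parts gives $\widehat{Q''}(\xi)=-4\pi^2\xi^2\widehat Q(\xi)$ for all complex $\xi$. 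Hence $z^2K(z)^2=-\widehat{Q''}(z)/(4\pi^2)$.

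Next I apply Lemma~\ref{Lem:BGST} twice, first with $f=Q$ and then with $f=Q''$. Both are real valued, even, smooth (so Lipschitz at $0$) and supported in $[-1,1]$, and neither depends on $T$. The first application gives the main term
$$
\frac{T}{2\pi}\log T\Big(Q(0)+2\int_0^1\alpha Q(\alpha)\,d\alpha+O_\eta\big((\log T)^{-1/2}\big)\Big).
$$
The second gives $-\frac{1}{4L^2}\sum\widehat{Q''}(z)w(\rho-\rho')=O_\eta(T/\log T)$, which is absorbed into the error term. So the asymptotic formula holds with
$$
C_\eta:=Q(0)+2\int_0^1\alpha Q(\alpha)\,d\alpha,
$$
for every admissible $\eta$. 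Note also that $K(0)=\widehat{\eta^2}(0)=1$ and $\widehat Q(0)=1$.

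It remains to choose $\eta$ with $C_\eta$ within $\varepsilon$ of $C_{\mathrm{MT}}$. The functional $C_\eta$ depends only on $g:=\eta^2$, through $Q=g*g$. By the Montgomery--Taylor extremal problem, the infimum of $(g*g)(0)+2\int_0^1\alpha\,(g*g)(\alpha)\,d\alpha$ over even $g$ supported in $[-1/2,1/2]$ with $\int g=1$ equals $C_{\mathrm{MT}}$. I expect the extremal $g_0$ to be of the form $c\cos(\sqrt2\,x)$ on $[-1/2,1/2]$, or a closely related trigonometric profile. I will verify this directly from the Euler--Lagrange equation, which is a second-order ODE with boundary conditions at $\pm1/2$, and check that the minimum value evaluates to $\frac12+\frac1{\sqrt2}\cot(1/\sqrt2)$. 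The important feature is that $g_0$ is nonnegative, continuous and bounded on $[-1/2,1/2]$.

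To obtain a smooth square root I will:
\begin{enumerate}
\item dilate $g_0$ slightly so that its support lies in $[-(1-\delta)/2,(1-\delta)/2]$;
\item take $\eta_\delta$ to be a smooth, even, compactly supported function in $(-1/2,1/2)$ approximating $\sqrt{g_0}$ in $L^4$, for instance by mollifying $\sqrt{g_0}$;
\item renormalize so that $\int\eta_\delta^2=1$.
\end{enumerate}
Then $\eta_\delta^2\to g_0$ in $L^2$ and in $L^1$, so $\eta_\delta^2*\eta_\delta^2\to g_0*g_0$ uniformly on $[-1,1]$. Consequently $C_{\eta_\delta}\to C_{\mathrm{MT}}$, and for small $\delta$ this gives $\eta_\varepsilon$.

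The main obstacle is this last step. It requires identifying the Montgomery--Taylor extremal and its value precisely, and confirming that the extremal is nonnegative, so that it can be written as $\eta^2$. The weight-removal step is routine once one sees that $z^2K^2$ is again a fixed Fourier transform of a function to which Lemma~\ref{Lem:BGST} applies.
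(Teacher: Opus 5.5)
Your proposal is correct and follows the paper's proof essentially step for step. It uses the same exact identity $K(z)^2=\bigl(\widehat{Q}(z)-\widehat{Q''}(z)/(4(\log T)^2)\bigr)w(\rho-\rho')$ with two applications of Lemma~\ref{Lem:BGST}, where the $Q''$ term contributes only $O(T/\log T)$, and the same near-extremal $g_0(x)=\cos(\sqrt2 x)/(\sqrt2\sin(1/\sqrt2))$ on $[-1/2,1/2]$, which is strictly positive there. The differences are cosmetic: the paper smooths by multiplying $\sqrt{g_0}$ by an even cutoff $\psi_\delta$ rather than dilating and mollifying, and it cites Montgomery--Taylor for the value $Q_0(0)+2\int_0^1\alpha Q_0(\alpha)\,d\alpha=C_{\mathrm{MT}}$ (only this value at $g_0$ is needed, not minimality).
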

\begin{rem} Before proving the lemma, we briefly compare it with the corresponding
formulation in the Claude proof \cite{AlFu26}. Put
$
L=\log(T/(2\pi)),
\gamma_\rho=-i(\rho-\frac12),$ and $
\psi(u)=\cos(\sqrt2u).$ Choose a smooth cutoff $\chi$ such that $\chi(u)=0$ for $u\leq0$
and $\chi(u)=1$ for $u\geq1$. Define
$\phi(u)=
\chi\left(\frac L2+u\right)
\chi\left(\frac L2-u\right)\psi(u/L)^{1/2},$
and $
\alpha_k=T+2\pi k/L,$ for $
0\leq k<d:=\left\lfloor LT/(2\pi)\right\rfloor.
$
Then the second-moment formula used in \cite{AlFu26} takes the form (see Theorem 5.7 of \cite{AlFu26})
$$
\frac1{a^2L^4}
\sum_{0\leq k,k'<d}
\Bigg(
\sum_{\substack{\rho\\ \re(\gamma_\rho)\in [T-\sqrt T,2T+\sqrt T)}}
\widehat\phi\left(\frac{\gamma_\rho-\alpha_k}{2\pi}\right)
\widehat\phi\left(\frac{\gamma_\rho-\alpha_{k'}}{2\pi}\right)
\Bigg)^2
=
\left(C_{\mathrm{MT}}+o(1)\right)\frac{T}{2\pi} \log T,$$
where 
$a=\frac1L\int_{\mathbb R}\phi(u)^2\,du.$ 
\end{rem}
\begin{proof}[Proof of Lemma \ref{Lem:ConstructionKernel}]
Let $
I=[-1/2,1/2]
$
and define
$$
f_0(x)
:=\begin{cases}
\frac{\cos(\sqrt2x)}
{\sqrt2\sin(1/\sqrt2)} &\textup{ if } x\in I,\\
0 & \textup{ otherwise.}
 \end{cases}
$$
Since
$
|\sqrt2x|\leq\frac1{\sqrt2}<\frac{\pi}{2}$ for all $x\in I$,
the function $f_0$ is strictly positive on $I$. Moreover, $f_0$ is real valued and even, and we have
$$
\int_{-\infty}^{\infty}f_0(x)\,dx
=
\frac{1}{\sqrt2\sin(1/\sqrt2)}
\int_{-1/2}^{1/2}\cos(\sqrt2x)\,dx
=1.
$$
For $0<\delta<1/4$, let
$
\psi_\delta\in C_c^\infty((-1/2,1/2))
$
 be an even function such that
$
0\leq\psi_\delta\leq1
$ and
$
\psi_\delta(x)=1$
for
$|x|\leq\frac12-\delta.$
Put
$$
A_\delta
:=
\int_{-\infty}^{\infty}\psi_\delta(x)^2f_0(x)\,dx.
$$
We now define
$$
\eta_\delta(x)
:=
\frac{\psi_\delta(x)\sqrt{f_0(x)}}{\sqrt{A_\delta}}
\quad \textup{ and } \quad 
f_\delta(x):=\eta_\delta(x)^2
=
\frac{\psi_\delta(x)^2f_0(x)}{A_\delta}.
$$
Since $f_0$ is smooth and strictly positive on $(-\frac12, \frac12)$ it follows that
$
\eta_\delta,f_\delta\in C_c^\infty\big((-1/2,1/2)\big).
$
Moreover, $\eta_\delta$ and $f_\delta$ are real valued and even, $f_\delta\geq0$, and
$$
\int_{-\infty}^{\infty}f_\delta(x)\,dx=1.
$$
Hence, if
$
K_\delta(z):=\widehat{f_\delta}(z)
=\widehat{\eta_\delta^2}(z),
$
then
$
K_\delta(0)=1.
$
We now define
$
Q_\delta:=f_\delta*f_\delta.
$
Since $f_\delta\in C_c^\infty((-1/2,1/2))$, we have
$
Q_\delta\in C_c^\infty((-1,1)).
$
In particular, $Q_\delta$ and $Q_\delta''$ are real-valued even functions supported in $[-1,1]$, and both are Lipschitz continuous at $0$. Thus, both functions satisfy the hypotheses of Lemma~\ref{Lem:BGST}.
Moreover we have $
\widehat Q_\delta(z)
=
\widehat f_\delta(z)^2
=
K_\delta(z)^2
$
for all $z\in \C$. 
We now define
\begin{equation}\label{Eq:IdentityFourierLinear}
r_{\delta,T}
:=
Q_\delta-\frac{Q_\delta''}{4(\log T)^2}.
\end{equation}
Since $Q_\delta$ is smooth and compactly supported, integration by parts twice gives
$$
\widehat{Q_\delta''}(z)
=
(2\pi iz)^2\widehat Q_\delta(z)
=
-4\pi^2z^2K_\delta(z)^2.
$$
Therefore we derive
$$
\widehat r_{\delta,T}(z)
=
\left(
1+\frac{\pi^2z^2}{(\log T)^2}
\right)K_\delta(z)^2,
$$
and hence
$$
\widehat r_{\delta,T}\left(
i(\rho-\rho')\frac{\log T}{2\pi}
\right)
=
\left(
1-\frac{(\rho-\rho')^2}{4}
\right)
K_\delta\left(
i(\rho-\rho')\frac{\log T}{2\pi}
\right)^2.
$$
Thus we deduce that
\begin{align}\label{Eq:LinearCombinationKernel}
& K_\delta\left(
i(\rho-\rho')\frac{\log T}{2\pi}
\right)^2=\widehat r_{\delta,T}\left(
i(\rho-\rho')\frac{\log T}{2\pi}
\right)
w(\rho-\rho')\nonumber\\
& = \widehat{Q_{\delta}}\left(
i(\rho-\rho')\frac{\log T}{2\pi}
\right)
w(\rho-\rho')- \frac{1}{4(\log T)^2}\widehat{Q_{\delta}''}\left(
i(\rho-\rho')\frac{\log T}{2\pi}
\right)
w(\rho-\rho'),
\end{align}
by \eqref{Eq:IdentityFourierLinear} and the linearity of the Fourier transform.
Moreover, by Lemma~\ref{Lem:BGST} we obtain
\begin{align*}
&\sum_{\substack{\rho,\rho'\\0<\gamma,\gamma'\leq T}}\widehat{Q_{\delta}}\left(
i(\rho-\rho')\frac{\log T}{2\pi}
\right)
w(\rho-\rho')\\
& \quad \quad =
\frac{T}{2\pi}\log T
\left(
Q_{\delta}(0) + 2\int_{0}^1 \alpha Q_{\delta}(\alpha)\,d\alpha
+O_\delta\left(\frac{1}{\sqrt{\log T}}\right)
\right),
\end{align*}
and similarly 
\begin{align*}
&\sum_{\substack{\rho,\rho'\\0<\gamma,\gamma'\leq T}}\widehat{Q_{\delta}''}\left(
i(\rho-\rho')\frac{\log T}{2\pi}
\right)
w(\rho-\rho')\\
& \quad \quad =
\frac{T}{2\pi}\log T
\left(
Q_{\delta}''(0) + 2\int_{0}^1 \alpha Q_{\delta}''(\alpha)\,d\alpha
+O_\delta\left(\frac{1}{\sqrt{\log T}}\right)
\right).
\end{align*}
Combining these estimates with \eqref{Eq:LinearCombinationKernel} we derive
$$
\begin{aligned}
\sum_{\substack{\rho,\rho'\\0<\gamma,\gamma'\leq T}}
K_\delta\left(
i(\rho-\rho')\frac{\log T}{2\pi}
\right)^2
&=
\frac{T}{2\pi}\log T
\left(
Q_{\delta}(0) + 2\int_{0}^1 \alpha Q_{\delta}(\alpha)\,d\alpha
+O_\delta\left(\frac{1}{\sqrt{\log T}}\right)
\right).\\
\end{aligned} 
$$
We now write $$C_\delta
:=
Q_{\delta}(0) + 2\int_{0}^1 \alpha Q_{\delta}(\alpha)\,d\alpha,$$ and $Q_0:=f_0*f_0$. Since $f_\delta\to f_0$ in
$L^1(\mathbb R)\cap L^2(\mathbb R)$  we have 
$$
\lim_{\delta\to 0} C_\delta=
Q_0(0)+2\int_0^1\alpha Q_0(\alpha)\,d\alpha= C_{\rm MT},
$$ 
 where the last calculation was performed by Montgomery and Taylor (see \cite{Mo75}).
Thus we may choose $\delta>0$ so small such that
$
|C_\delta-C_{\mathrm{MT}}|<\varepsilon.
$
We fix such a $\delta$, and put
$
\eta_\varepsilon:=\eta_\delta,$
$K_\varepsilon:=K_\delta$ and 
$C_{\eta_\varepsilon}:=C_\delta.$ 
This completes the proof.
\end{proof}

\begin{rem}\label{Rem:MT}
It follows from Corollary 14 of Carneiro, Chandee, Littmann and Milinovich \cite{CCLM} that $C_{\eta_\varepsilon}\geq C_{\rm MT}$. The corresponding extremal function was originally identified by Montgomery and Taylor \cite{Mo75}. 
Together with the preceding construction, this shows that $C_{\mathrm{MT}}$ is the optimal constant that one can obtain using our method.

\end{rem}

\begin{proof}[Proof of Theorem~\ref{Thm:Main}]
Fix $0<\varepsilon<1/2$, and let $\eta_\varepsilon$, $K_\varepsilon$ and
$C_{\eta_\varepsilon}$ be given by Lemma~\ref{Lem:ConstructionKernel}.
For $T\geq 20$, let $\mc Z_T$ be the finite multiset
$$
\mc Z_T
:=
\left\{
i\left(\rho-\frac12\right)\frac{\log T}{2\pi}
:
0<\gamma\leq T
\right\},
$$
where every zero $\rho$ occurs with its multiplicity. The functional
equation of the zeta function implies that $1-\overline{\rho}$
is a zero with the same multiplicity as $\rho$. Moreover,
$$
\overline{
i\left(\rho-\frac12\right)\frac{\log T}{2\pi}
}
=
i\left(1-\overline{\rho}-\frac12\right)
\frac{\log T}{2\pi}.
$$
Thus, $\mc Z_T$ is invariant under complex conjugation. Furthermore,
an element of $\mc Z_T$ is real if and only if the corresponding zero
satisfies $\beta=1/2$, and it is simple if and only if that zero is
simple. Therefore, Proposition~\ref{Pro:FourierHilbert} gives
\begin{align*}
N_0^s(T)
& \geq
2N(T)
-
\sum_{\substack{\rho,\rho'\\0<\gamma,\gamma'\leq T}}
K_\varepsilon\left(
i(\rho-\rho')\frac{\log T}{2\pi}
\right)^2,\\
N_d(T)
&\geq
\frac32N(T)
-
\frac12\sum_{\substack{\rho,\rho'\\0<\gamma,\gamma'\leq T}}
K_\varepsilon\left(
i(\rho-\rho')\frac{\log T}{2\pi}
\right)^2,
\end{align*}
and 
$$N_s(T)+ N_0(T)
\geq
3 N(T)
-
\sum_{\substack{\rho,\rho'\\0<\gamma,\gamma'\leq T}}
K_\varepsilon\left(
i(\rho-\rho')\frac{\log T}{2\pi}
\right)^2.
$$
By Lemma~\ref{Lem:ConstructionKernel}, we have 
\begin{equation}\label{Eq:BestKernelEstimate}
\sum_{\substack{\rho,\rho'\\0<\gamma,\gamma'\leq T}}
K_\varepsilon\left(
i(\rho-\rho')\frac{\log T}{2\pi}
\right)^2=\left(
C_{\eta_\varepsilon}
+
O_{\varepsilon}\left(
\frac1{\sqrt{\log T}}
\right)
\right)
\frac{T}{2\pi}\log T.
\end{equation}
Hence, we deduce that
$$
\frac{N_0^s(T)}{N(T)}
\geq
2-C_{\eta_\varepsilon}+o_{\varepsilon}(1), \quad \frac{N_d(T)}{N(T)}
\geq
\frac32-\frac{C_{\eta_\varepsilon}}{2}+o_{\varepsilon}(1),
$$
and 
$$ \frac{N_s(T)+N_0(T)}{N(T)} \geq 3-C_{\eta_\varepsilon}+o_{\varepsilon}(1).$$
Moreover, by \eqref{Eq:BestKernelEstimate} and \eqref{Eq:SimpleOrCritical} we get 
$$ \frac{N_{s\cup 0}(T)}{N(T)}\geq \frac{5+2\sqrt{2}-2C_{\eta_{\varepsilon}}}{3+2\sqrt{2}} +o_{\varepsilon}(1).$$
Since
$
\left|C_{\eta_\varepsilon}-C_{\mathrm{MT}}\right|
<\varepsilon$, $C_0= 2-C_{\mathrm{MT}}$, $C_1=3/2-C_{\mathrm{MT}}/2$ and $C_2=(5+2\sqrt{2}-2C_{\mathrm{MT}})/(3+2\sqrt{2})$,
we obtain
$$
\liminf_{T\to\infty}
\frac{N_0^s(T)}{N(T)}
\geq
C_0-\varepsilon,  \quad \liminf_{T\to\infty}
\frac{N_d(T)}{N(T)}
\geq
C_1-\varepsilon,
$$
and 
$$
\liminf_{T\to\infty}
\frac{N_s(T)+N_0(T)}{2N(T)}
\geq
C_1-\varepsilon,  \quad \textup{ and } \quad  \liminf_{T\to\infty}
\frac{N_{s\cup 0}(T)}{N(T)}
\geq
C_2-\varepsilon.
$$
Finally, letting $\varepsilon\to0$ completes the proof.
\end{proof}

\appendix
\section{Formal Certificate by AxiomProver}

AxiomProver, an AI system currently under development by Axiom Math, autonomously generated (from natural language) an unconditional formal certificate for Proposition~\ref{Pro:FourierHilbert}.  Using Proposition~\ref{Pro:FourierHilbert}, a formal certificate was also autonomously generated for Theorem~\ref{Thm:Main} under the assumptions of \cite[Lemma~5]{BGST24} (see Lemma~\ref{Lem:BGST} above) and the standard Riemann--von Mangoldt asymptotic $N(T)\sim \frac{1}{2\pi}T\log T$ (see \cite[Theorem~9.4]{Tit86}).  The certificates can be found in
\begin{center}
\url{https://github.com/AxiomMath/ZetaZerosV2}
\end{center}
This repository contains a formal challenge file containing the statements of the three main results, which can be mechanically verified using the Comparator tool in Lean.

\end{document}